\newtheorem{thm}{Theorem}
\newtheorem{lem}[thm]{Lemma}
\newtheorem{prop}[thm]{Proposition}
\theoremstyle{definition}
\newtheorem{defn}[thm]{Definition}
\newtheorem{ex}{Example}
\theoremstyle{remark}
\newtheorem{rem}[thm]{Remark}
\newtheorem{cor}[thm]{Corollary}
\newcommand{\RR}{\mathbb R}
\newcommand{\QQ}{\mathbb Q}
\newcommand{\ZZ}{\mathbb Z}
\renewcommand{\O}{\mathcal O}
\newcommand{\be}{\begin{equation}}
\newcommand{\ee}{\end{equation}}
\newcommand{\tensor}{\otimes}
\newcommand{\eps}{\varepsilon}
\newcommand{\wG}{\widetilde{G}}
\DeclareMathOperator{\Aut}{\mathrm{Aut}}
\DeclareMathOperator{\diag}{\mathrm{diag}}
\numberwithin{thm}{section}
\begin{document}

\author{Assaf Goldberger and Ilias Kotsireas}





\title{Formal Orthogonal Pairs via Monomial Representations and Cohomology}

\maketitle

\abstract{A Formal Orthogonal Pair is a pair $(A,B)$ of symbolic rectangular matrices such that $AB^T=0$. It can be applied for the construction of Hadamard and Weighing matrices. In this paper we introduce a systematic way for constructing such pairs. Our method involves Representation Theory and Group Cohomology. The orthogonality property is a consequence of non-vanishing maps between certain cohomology groups. This construction has strong connections to the theory of Association Schemes and (weighted) Coherent Configurations. Our techniques are also capable for producing (anti-) amicable pairs. A handful of examples are given.}






\section{Introduction}
A (classical) \emph{Hadamard Matrix} of order $n$ is a $n\times n$ matrix $H$ over $\{-1,1\}$ such that $HH^T=nI_n$. A \emph{Weighing Matrix} of order $n$ and weight $k$ is a matrix $W$ over $\{-1,0,1\}$ such that $WW^T=kI_n$. We shall say that $H$ is a $H(n)$ and $W$ is a $W(n,k)$. A well-known necessary condition for the existence of Hadamard matrices (excluding the trivial cases $n = 1, 2$) is that $ n \equiv 0 \,\, (mod \,\,4)$. The sufficiency of the condition $n \equiv 0 \,\, (mod \,\, 4)$ is the famous Hadamard Conjecture, which states that an $H(n)$ should exist for every $n$ divisible by $4$. N. J. A. Sloane maintains an on-line database of Hadamard matrices of various orders \cite{NJAS:HM:DB}. The Magma computer algebra system \cite{Magma:1997} features two databases containing Hadamard and skew-Hadamard matrices of various orders.\\

There are hundreds of constructions for Hadamard and Weighing Matrices. One prolific method is to employ \emph{Orthogonal Designs}, see \cite{Geramita:Seberry:LNPAM:1979}, \cite{Seberry:ODs:2017}. This is a symbolic $n\times n$ matrix $O$ with commuting indeterminate entries taken from a set $\{0,t_1,\ldots,t_m\}$ such that $OO^T=\sum s_it_i^2 I_n$ for some positive $(s_1,\ldots,s_m)\in \ZZ^m$ with $\sum s_i=n$. In a typical application, one would like to replace the indeterminates with matrix blocks. The problem is that blocks usually do not commute. Hence by substituting $t_i\gets B_i$, one is led to the condition that $B_iB_j^T=B_jB_i^T$ for all $i,j$. We say that $(B_i,B_j)$ is an \emph{Amicable Pair}.\\

Another prolific method is that of \emph{Cocyclic Matrices}. These are matrices $M$ indexed by a finite group $G$ such that for all $i,j$, $M_{i,j}=f(i^{-1}j)\omega(i^{-1},j)/\omega(i^{-1},i),$ where $f:G\to \{-1,0,1\}$ is some function, and $\omega:G\times G \to \{-1,1\}$ is a \emph{2-cocycle} (see below, this is slightly inconsistent with some of the literature, e.g. as in \cite{Horadam1995}). Allowing $f$ to take values in a ring, they form a *-Algebra (i.e. closed under addition, multiplication and transpose), which is why they are good candidates for being Hadamard matrices. Cocyclic matrices arose first from an abstract notion of orthogonality \cite{deLauney:Flannery:ADT:2011}. Finding Cocyclic Hadamard Matrices is an active area of research, see for example \cite{Alvarez:AAECC:2012}, \cite{Alvarez:JSC:2018}, \cite{OCathain:Roeder:DCC:2011}. Many known Hadamard Matrices are found to be Cocyclic, see for example \cite{Horadam:PUP:2007,Horadam:CC:2010}. There is a close connection between Cocyclic Matrices and automorphisms. If $M$ is a $\{-1,0,1\}$-matrix, an automorphism of $M$ is a pair $(L,R)$ of $\{\pm 1\}$ monomial matrices (= signed permutations) such that $LMR^T=M$. The collection of all automorphisms is a group, denoted $\Aut(M)$ under matrix multiplication. For a Cocyclic $M$, one has a central $2$-covering group of $G$ as an automorphism subgroup.\\

In this paper we will introduce a new construction for Hadamard and weighing matrices. We define

\begin{defn}
	Let $I,J$ be finite disjoint sets of (not necessarily commuting) indeterminates. A \emph{Formal Orthogonal Pair} (FOP) is a pair $(A,B)$ of rectangular matrices such that
	\begin{itemize}
		\item[(i)] The entries of $A$ are $\pm i$ for elements $i\in I\cup \{0\}$.
		\item[(ii)] The entries of $B$ are $\pm j$ for elements $j\in J\cup \{0\}$.
		\item[(iii)] $AB^T=0$.
	\end{itemize}
\end{defn}

The matrices $A$ and $B$ are assumed to be over the non-commutative ring $\ZZ\{I\cup J\}$ generated over $\ZZ$ by the symbols in $I\cup J$, where $-x$ is the negative of $x$. The equality $AB^T=0$ is to be understood over that ring.\\

Note that unlike with Orthogonal Designs, we do not assume that the indeterminates commute. In fact, by substituting arbitrary blocks, even rectangular (all of the same size) into an FOP, we still will get the orthogonality. Note that this notion of FOP is different from the notion of "Orthogonal Pairs" as appears in \cite{Craigen:ArsComb:1992}\\

In this paper, we will show how to construct orthogonal pairs, by techniques from Representation Theory and Group Cohomology.  By no means we can construct all of them. Those that we can construct come together with an automorphism group $G$, whose action preserves both $A$ and $B$. Moreover, the orthogonality property will be a consequence of the specific group action. Our construction may be explained in terms of Representation Theory (simpler), and Group Cohomology (harder, but more general). For the cohomology point of view, we refer the reader to \cite{goldberger2019cohomology} where the theory is explained in detail. As a side effect, we can also construct (formal) amicable pairs. 

\section{Projective Monomial Representations}\label{mon}
\subsection{Basic Theory}
We are given a finite group $G$. A \emph{Projective Representation} of $G$ is a map $\rho: G\to GL(V)$ where $V$ is a finite dimensional vector space over a field $F$, and for all $g,g'\ni G$, $\rho(gg')=\pm \rho(g)\rho(g')$. In our applications we will have $F=\QQ$. By abuse, we will say that $V$ is a Projective Representation of $G$. If we write $\rho(gg')=\omega(g,g') \rho(g)\rho(g')$ for $\omega:G\to \{\pm 1\}$, then $\omega$ is a $2$-cocycle:

$$ \omega(g',g'')\omega(g,g'g'')=\omega(gg',g'')\omega(g,g'), \ \forall g,g',g''\in G.$$ We say that $\omega$ is the cocycle of the projective representation $V$. A projective representation $V$ is \emph{irreducible} if there is no $G$-invariant subspace other than 0. It is a theorem that when $char F$ is prime to $2|G|$, then each representation breaks down to a direct sum of irreducible representations. A \emph{morphism} of representations $V\to V'$ is a homomorphism of spaces, that commutes with the $G$-action. Note that $V,V'$ need to have the same cocycle.\\

Attached to a 2-cocycle $\omega$, is a central extension $\widetilde G$ of $G$ such that 
$$1 \to \{\pm 1\}\to \widetilde{G} \to G \to 1$$ is exact, and $\{\pm 1\}$ is in the center of $\wG$. The 2-cocycle $\omega$ can be recovered from this extension by the existence of a functional section $s:G\to \wG$ such that $s(g)s(g')=\omega(g,g') s(gg')$. It is easy to see that every representation $\tilde \rho:\wG\to GL(V)$ of $\wG$ gives rise to a projective representation of $\rho:G\to GL(V)$ (with cocycle $\omega$), defined by $\rho=\tilde\rho\circ s$. Conversely, any projective representation of $G$ can be lifted to an ordinary representation of $\wG$. 
As with ordinary representations, we have\\
\begin{thm}[Schur's Lemma]
	If $T: V\to V'$ is a morphism between irreducible projective representations, then either (i) $T=0$, or (ii) $T$ is an isomorphism.$\Box$
\end{thm}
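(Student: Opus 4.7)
The plan is to imitate the classical proof of Schur's Lemma, being careful to use the projective morphism condition correctly. By definition a morphism $T:V\to V'$ is a linear map satisfying $T\rho(g)=\rho'(g)T$ for every $g\in G$, and $V$ and $V'$ must share the same cocycle $\omega$ (otherwise the relation would be inconsistent at products $gg'$, as the factors $\omega(g,g')$ would fail to cancel on the two sides).

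First I would verify that $\ker T\subseteq V$ is invariant under the projective $G$-action: for $v\in\ker T$ we have $T(\rho(g)v)=\rho'(g)(Tv)=0$, so $\rho(g)v\in\ker T$. A symmetric computation shows $\mathrm{Im}\,T\subseteq V'$ is $\rho'$-invariant. Irreducibility of $V$ then forces $\ker T$ to be $0$ or $V$, and irreducibility of $V'$ forces $\mathrm{Im}\,T$ to be $0$ or $V'$. If $T\neq 0$, the options $\ker T=V$ and $\mathrm{Im}\,T=0$ are both excluded, so $T$ is simultaneously injective and surjective, hence an isomorphism.

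A conceptually cleaner route, fitting better with the discussion just above the statement, is to lift everything to the central extension $\wG$: since $V$ and $V'$ share the cocycle $\omega$, they both come from ordinary representations $\tilde\rho,\tilde\rho'$ of $\wG$ via the section $s$, and the morphism condition translates to $T\tilde\rho(x)=\tilde\rho'(x)T$ for all $x\in\wG$. A subspace is $\wG$-invariant if and only if it is invariant under the projective $G$-action (because $\tilde\rho(s(g))=\rho(g)$ and the central $\pm1$ acts by scalars, which preserve every subspace), so irreducibility transfers and the classical Schur's Lemma for $\wG$ applies verbatim.

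There is no substantial obstacle here; the only subtle point is the cocycle cancellation in the first paragraph, which is exactly why the definition of a morphism insists that $V$ and $V'$ carry a common cocycle.
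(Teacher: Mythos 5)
Your proof is correct: the paper states this as a known result and omits the proof entirely (hence the $\Box$), and your kernel/image argument is exactly the standard one it relies on, with the lift to the central extension $\wG$ being the route the surrounding discussion implicitly suggests. Both of your variants are sound, and you correctly identify the one genuinely projective-specific point, namely that the shared cocycle is what makes the intertwining relation consistent.
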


\begin{defn}
	A projective monomial representation is a projective representation $\rho:G\to GL(V)$ together with a basis $M=\{m_i\}$ of $V$ such that for any $g\in G$, $gm_i=\pm m_j$, $j=j(g,i)$. In other words, the representing matrix $[\rho(g)]_M$ in the basis $M$ is monomial (= a signed permutation).
\end{defn}
We call $M$ a \emph{monomial basis} of $V$.

\begin{ex}
	Let $X$ be a finite $G$-set. Then each $g\in G$ acts on $X$ as a permutation $\pi(g)$. Consider the vector space $V=F[X]$, of formal sums of elements of $X$ with coefficients in $F$. Then $V$ is a monomial (in fact permutation) representation of $G$, with monomial basis $M=X$.
\end{ex}

\begin{ex}
	Let $G=B_3\subset O(3)$ the symmetry group of the cube $C=[-1,1]^3\subset \RR^3$. Then $B_3$ is the group of all $\{\pm 1\}$-$3x3$ monomial matrices, and the inclusion $B_3\subset O(3)$ is a monomial representation $V$. Moreover,  $\hat V=\RR[C]$ is an 8-dimensional permutation representation of $G$, and it can be shown that $\hat V$ contains $V$ as a direct summand.
\end{ex}

\begin{ex}\label{cocrep} {\bf Cocyclic Representations.}\label{cocyclic} Let $G$ be a finite group and $\omega:G\times G \to \{\pm 1\}$ be a 2-cocycle. We define an action of $G$ on $F[G]$ by 
	$$(g,[g'])\mapsto \omega(g,g')[gg'].$$ Then it is easy to check that this defines a projective monomial representation of $G$.
\end{ex}

\begin{defn}
	Let $V,V'$ be projective monomial representations of $G$, with monomial bases $M,M'$. Let $Hom_M(V,V')$ be the set of all matrices $[T]_{M,M'}$ representing a morphism $T:V\to V'$ with respect to $M$ and $M'$. We also write $End_M(V)=Hom_M(V,V)$. 
\end{defn}
Given $V,M,V',M'$ as in the definition, we may define a $G$-action on the group $Hom(V,V')$ (=linear transformations $V\to V'$ as vector spaces) by setting
$$(gf)(v)\ := \ gf(g^{-1}v).$$ This defines an {\bf ordinary} $G$-representation on $Hom(V,V')$. By passing to matrix representations, it takes the form
$$ (g,[T]_{M,M'})\mapsto [g]_M [T]_{M,M'} [g]_{M'}^{-1}.$$ The elements of $Hom_M(V,V')$ are precisely those that are left invariant under this action.  

We claim:
\begin{prop}\label{prop24}
	If $V,V'$ are two projective monomial representations, then $Hom_M(V,V')$ is spanned over $F$ by a basis $\{E_i\}$ of $\{0,-1,1\}$-matrices with pairwise disjoint supports. Each $E_i$ is supported on an orbit of the $G$-action on $M\times M'$.
\end{prop}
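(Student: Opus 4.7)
The plan is to realize $Hom_M(V,V')$ as the space of $G$-invariants inside the full matrix space $Hom(V,V')$ and then decompose that action according to the $G$-orbits on $M\times M'$.

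First, I would make the action explicit on elementary matrices. Writing $g\cdot m=\eps_g(m)\,\sigma_g(m)$ with $\sigma_g(m)\in M$, $\eps_g(m)\in\{\pm 1\}$ (and analogously on $M'$), the conjugation formula $[T]_{M,M'}\mapsto[g]_M[T]_{M,M'}[g]_{M'}^{-1}$ shows that the elementary matrix $e_{m,m'}$ (with a single $1$ in position $(m,m')$) is sent to $\pm e_{\sigma_g(m),\sigma_g(m')}$. Thus $G$ permutes the set of elementary matrices up to sign, respecting the diagonal permutation action on $M\times M'$. Decomposing that permutation action into orbits $O_1,\ldots,O_r$ gives a $G$-stable splitting $Hom(V,V')=\bigoplus_i F[O_i]$, so on invariants $Hom_M(V,V')=\bigoplus_i F[O_i]^G$. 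It therefore suffices to analyze one orbit at a time.

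Fix an orbit $O$, choose a basepoint $(m^*,m'^*)\in O$, and let $H\leq G$ be its permutation stabilizer. Each $h\in H$ fixes the line $F\cdot e_{m^*,m'^*}$ and acts on it by some sign $\chi(h)\in\{\pm 1\}$. The critical input from the preceding paragraph of the excerpt is that the induced action of $G$ on $Hom(V,V')$ is an \emph{ordinary} representation, not merely projective; equivalently, the two $2$-cocycles coming from $M$ and $M'$ cancel in the conjugation formula. This lets me conclude that $\chi:H\to\{\pm 1\}$ is a genuine group homomorphism, so $F[O]$ is isomorphic (via the choice of basepoint) to the induced representation from $\chi$. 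Frobenius reciprocity then gives
\[
\dim F[O]^G=\dim Hom_H(\mathrm{triv},\chi)=\begin{cases}1,& \chi\text{ trivial,}\\ 0,& \text{otherwise.}\end{cases}
\]

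When $\chi$ is trivial, the averaging element $E_O:=\sum_{gH\in G/H}g\cdot e_{m^*,m'^*}$ is well defined, nonzero, and $G$-invariant. Any two coset representatives carrying $(m^*,m'^*)$ to a given point of $O$ differ by an element of $H$, which now contributes sign $+1$, so $E_O$ has entries in $\{0,\pm 1\}$ with exactly one nonzero entry per point of $O$; in particular, its support equals $O$. When $\chi$ is nontrivial, $F[O]^G=0$. Collecting the nonzero $E_O$'s across all orbits yields the required basis of $Hom_M(V,V')$: their supports are pairwise disjoint because distinct orbits are disjoint subsets of $M\times M'$, their entries lie in $\{0,\pm 1\}$, and they span $Hom_M(V,V')$ by the orbit decomposition. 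The main obstacle is the verification that $\chi$ is a homomorphism in the projective setting; everything else is standard permutation-representation bookkeeping once that sign cancellation has been established.
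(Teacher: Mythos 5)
Your proof is correct, and it reaches the conclusion by a genuinely more structural route than the paper's. The paper argues directly on matrix entries: the invariance equation \eqref{inv} forces the value of $[T]_{M,M'}$ at any point of an orbit $O_i$ to be determined (with a propagated sign $\eps\mu$) by its value at a chosen basepoint, so setting that value to $1$ produces $E_i$; the possibility that conflicting signs force $E_i=0$ is deferred to the remark after the proposition. You instead split $Hom(V,V')=\bigoplus_i F[O_i]$ as $G$-modules, identify each block with $\mathrm{Ind}_H^G\chi$ for the sign character $\chi$ of the basepoint stabilizer, and compute $\dim F[O_i]^G\in\{0,1\}$ by Frobenius reciprocity, exhibiting the invariant as an averaged sum over $G/H$. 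Both arguments hinge on the same key observation, which the paper states just before the proposition and which you correctly flag as the crux: because $V$ and $V'$ carry the \emph{same} cocycle, the conjugation action on $Hom(V,V')$ is an ordinary (not merely projective) representation, so $\chi$ is an honest homomorphism and the signs propagate consistently along an orbit. What your version buys is a clean dimension formula ($\dim Hom_M(V,V')$ equals the number of orbits with trivial $\chi$), which is exactly the orientability criterion the paper only establishes later in \S\ref{ori}, and it handles the degenerate ``$E_i=0$'' orbits inside the proof rather than in a remark; what the paper's version buys is a completely elementary, constructive description of each $E_i$ entry by entry. One cosmetic point: you write $e_{m,m'}\mapsto\pm e_{\sigma_g(m),\sigma_g(m')}$ using the same symbol $\sigma_g$ for the permutation actions on $M$ and on $M'$; these are different permutations and should be notated separately, though nothing in the argument depends on this.
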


\begin{proof}
	The elements of $H_M(V,V')$ are those that satisfy
	\be \label{inv} [T]_{M,M'}=[g]_M[T]_{M,M'}[g]_{M'}^{-1}.\ee By ignoring signs, the group $G$ acts by permutations on the monomial bases $M$ and $M'$ and hence on $M\times M'$, splitting $M\times M'=\sqcup_{i=1}^m O_i$ to a disjoint union of $G$-orbits.\\
	
	Pick a representative $(v,w) \in O_i$. Then the matrix value of $[T]_{M,M'}$  at each position $(v',w')\in O_i$ is determined uniquely from the value at $(v,w)$. More precisely, if the value at $(v,w)$ is $a$, and $gv=\eps v'$, $gw=\mu w'$, then the value at $(v',w')$ is $\eps\mu a$. By setting up $a=1$ (and $0$ at all other orbits) we get a $\{0,\pm 1\}$-matrix $E_i$, and every other matrix in $Hom_M(V,V')$ is a unique linear combination of the $E_i$.  			
\end{proof}

\begin{rem}It should be noted that 
	some of the $E_i$ may be zero. This happens when the $G$-action gives conflicting signs to an element (equivalently all elements) in $O_i$. We discuss this in \S\ref{ori} below. This phenomena is fundamental to our construction. 
\end{rem}

\subsection{Structure as *-algebras, modules and relation to Weighted Coherent Configrations}

For a monomial representation $V$, $End_M(V)$ is a matrix algebra: It is closed under addition and matrix multiplication. It is also a *-algebra (i.e. closed under the transpose), as is evident from equation \eqref{inv}. But it has more structure. It has a basis $\{E_i\}$ of $\{0,-1,1\}$-matrices with disjoint supports. This implies that $E_i^T=E_j$ for some $j$, and that \be E_iE_j=\sum_k \lambda^k_{i,j} E_k\ , \ \lambda^k_{i,j}\in \ZZ.\ee 

Moreover, by forgetting signs we have a permutation $G$-action on the monomial basis, and hence obtain a representation which we shall denote $|V|$. Then $End_M(|V|)$ is again a *-algebra, has a basis $\{F_i\}$ of $\{0,1\}$-matrices supported on the $G$-orbits of $M\times M$, and satisfy
\be F_iF_j=\sum_k \mu^k_{i,j} E_k\ , \ \mu^k_{i,j}\in \ZZ_{\ge 0}.\ee 
The *-algebra $End_M(|V|)$ is a special case of a \emph{Coherent Configuration} ~\cite{higman1975coherent}. Moreover, there exists a $\{0,-1,1\}$-matrix $W$, called a \emph{weight}-matrix ~\cite{higman1976coherent}, such that 
\be\label{W} End_M(|V|)\to End_M(V), \ \ A \mapsto A\circ W\ee
is a surjective map of vector spaces. Here $\circ$ is the Hadamard  ( = entrywise) product.\\

All of the above can be extended to modules. The group $Hom_M(V,V')$ is closed under multiplication by $End_M(V)$ on the left, and $End_M(V')$ on the right. Hence it is a \emph{bimodule}. Moreover, it has a $\{0,-1,1\}$-basis $P_i$ with disjoint support such that 
\be E_iF_j=\sum \theta^k_{i,j} F_k\ , \ \theta^k_{i,j}\in \ZZ.\ee
Likewise one can define the bimodule $Hom_M(|V|,|V'|)$ and a weight matrix with a property analogous to \eqref{W}.

\begin{ex} {\bf The algebra of Cocyclic Matrices}
	By taking the projective monomial representation $V$ of Example \ref{cocrep}, the algebra $End_M(V)$ is precisely the algebra of Cocyclic Matrices (cf. Introduction). 
\end{ex}

\subsection{Monomial and Induced Representations}\label{23} Suppose that $V$ is a projective monomial representation of $G$ with basis $M$. Then $M=\sqcup M_i$ is a union of $G$-orbits and accordingly $V=\bigoplus_i V_i$, a sum of projective monomial representations. For the remainder of section \ref{23}, assume $M$ is $G$-transitive. Pick up $m_0\in M$ and let $H=\{h\in G: hm_0=\pm m_0\}.$ Let $\widetilde{H}\subset \wG$ be the preimage of $H$. Then a $G$-sets $M\simeq G/H=\wG/\widetilde{H}$. Now, write $hm_0=\chi(h)m_0$ for $\chi:H\to \{\pm 1\}$. Then $m_0$ spans a 1-dimensional projective representation of $H$ and with respect to the cocycle $\omega|_{H\times H}$, which in turn is a coboundary (of $\chi$). 
\begin{thm}\label{lem1}
	Let $F[G]$ be the cocyclic projective representation. We define a projective right $H$ action by letting 
	$([g],h)\mapsto \omega(g,h)[gh].$ With this action 
	$$V\simeq F[G]\tensor_H (Fm_0),$$ as projective $G$ representations.
\end{thm}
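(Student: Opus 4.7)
The plan is to exhibit an explicit map
$\phi\colon F[G]\otimes_H (Fm_0)\to V$, defined on pure tensors by $[g]\otimes m_0\mapsto g\cdot m_0$, and then verify it is an isomorphism of projective $G$-representations by checking $G$-equivariance and doing a dimension count. Before I can even regard the tensor product as a projective $G$-representation I need $F[G]$ to be an honest $(G,H)$-bimodule under the left cocyclic action of Example~\ref{cocrep} and the right $H$-action $[g]\cdot h=\omega(g,h)[gh]$ introduced in the statement. Commuting these two actions reduces to the identity $\omega(g',g)\omega(g'g,h)=\omega(g',gh)\omega(g,h)$, which is just the 2-cocycle relation; so this step is immediate.

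Next I would check that $\phi$ descends to the balanced tensor product. The balancing identity $\phi([g]\cdot h\otimes m_0)=\phi([g]\otimes hm_0)$ unwinds to $\omega(g,h)(gh)m_0=\chi(h)(gm_0)$. Combining the projective identity $\rho(gh)=\omega(g,h)\rho(g)\rho(h)$ with $hm_0=\chi(h)m_0$ gives $(gh)m_0=\omega(g,h)\chi(h)(gm_0)$, which (using $\omega^2\equiv 1$) produces precisely the required equation. $G$-equivariance is the same sort of computation: $\phi(g'\cdot([g]\otimes m_0))=\omega(g',g)(g'g)m_0=g'(gm_0)$, with the $\omega$ factors cancelling pairwise.

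The finish is a clean surjectivity-plus-dimension argument. Transitivity of $G$ on $M$ says every element of $M$ has the form $\pm gm_0$, so the image of $\phi$ contains the whole monomial basis of $V$. Picking left coset representatives $g_1,\dots,g_r$ for $H$ in $G$, the tensors $[g_i]\otimes m_0$ span $F[G]\otimes_H(Fm_0)$, yielding $\dim\bigl(F[G]\otimes_H(Fm_0)\bigr)\le [G:H]=|M|=\dim V$. Combined with surjectivity this forces $\phi$ to be an isomorphism of projective $G$-representations.

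The only real subtlety is the bookkeeping of projective $\pm$-signs coming from $\omega$ and $\chi$: each of the three verifications above (bimodule compatibility, descent to the balanced tensor product, $G$-equivariance) collapses to the cocycle identity or to the defining relation $\rho(gg')=\omega(g,g')\rho(g)\rho(g')$, but one must be careful to track the signs in exactly the right order. Once those are in hand, the isomorphism falls out for free.
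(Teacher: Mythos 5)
Your proof is correct. It is the same core idea as the paper's --- write down the canonical map between $V$ and $F[G]\otimes_H (Fm_0)$ and check it is an equivariant isomorphism --- but you run the map in the opposite direction, $[g]\otimes m_0\mapsto g\cdot m_0$, whereas the paper sends $m\mapsto \eps(m)\omega(g_m,1)[g_m]\otimes m_0$ after choosing, for each $m\in M$, an element $g_m$ with $g_m m_0=\eps(m)m$. Your direction is choice-free, so the sign bookkeeping reduces to the cocycle identity and to $(gh)m_0=\omega(g,h)\chi(h)\,gm_0$, and you bypass the paper's identity \eqref{ugly} entirely; the price is that you must verify descent to the balanced tensor product and finish with a surjectivity-plus-dimension-count argument, where the paper instead observes directly that the elements $[g_m]\otimes m_0$ form a basis of the target. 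Both arguments are complete. The one point worth making explicit in your write-up is that $\otimes_H$ must be read as the tensor product over the twisted group algebra $F^{\omega}[H]$ (equivalently, over $F[\widetilde H]$ with $-1$ acting as $-1$), since the right $H$-action on $F[G]$ and the left $H$-action on $Fm_0$ are only projective; your bimodule-compatibility and balancing checks are exactly what make that object well defined, so this is a matter of phrasing rather than a gap.
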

\begin{proof}
	We give a map $V\to F[G]\tensor_{H} (Fm_0)$ as follows. For each $m\in M$, choose an element $g_m\in \wG$ s.t. $g_mm_0=\eps(m) m$, $\eps(g)=\pm 1$. Then we send $m\mapsto \eps(m)\omega(g_m,1)[g_m]\tensor m_0$. Acting by $g$ on the left, we must show that $gm$ maps to the action of $g$ on $\eps(g)\omega(g_m,1)[g_m]\tensor m_0$. We may write $gg_m=g_{n}h$ for the basis element $n$ such that $gm=\tau(g,m)n$, and $h\in H$.\\
	
	 Interpreting the $g$ action on $F[G]$, we have to check that
	 $\tau(g,m)n \mapsto \omega(g,g_m)\eps(g)\omega(g_m,1)[gg_m]\tensor m_0=\omega(g,g_m)\eps(m)\omega(g_m,1)[g_nh]\tensor m_0=\omega(g,g_m)\eps(m)\omega(g_m,1)\chi(h)\omega(g_n,h)[g_n]\tensor m_0.$ This amounts to showing that
	 \be\label{ugly} \tau(g,m)\omega(g_n,1)\eps(n)= \omega(g,m)\eps(m)\omega(g_m,1)\chi(h)\omega(g_n,h).\ee
	 
	 But by using $gg_m=g_nh$ and the projectivity of $V$, we  
	see that $\omega(g,g_m)^{-1}g(g_mm_0)=\omega(g_n,h)^{-1}g_n(hm_0)$. Then \eqref{ugly} follows using $hm_0=\chi(h)m_0$, $g_mm_0=\eps(m)m$, $g_nm_0=\eps(n)n$ and $gm=\tau(g,m)n$, and the fact that $\omega(x,1)$ is independent of $x$. This shows that $V\to F[G]\tensor_H Fm_0$ is a $G$-homomorphism. The elements $[g_m]\tensor m_0$ are an $F$-basis for $F[G]\tensor_H Fm_0$, hence our map is an isomorphism.
\end{proof}

For every 2-cocycle $\omega$ on $G$ and every subgroup $H\subset G$ such that $\omega|_{HxH}=d\chi$ is a coboundary, one can construct a 1-dimensonal projective representation $U=span\{u_0\}$ of $H$, by letting $hu_0:=\chi(h)u_0.$
\begin{defn}
	The projective induced representation is
	$$ Ind_H^G U = F[G]\tensor_H U.$$
\end{defn}

 The induced representation $V=Ind_H^G U$, is monomial, with monomial basis $[g_i]\tensor u_0$, $g_i$ being any choice of  right coset representatives: $G=\sqcup_i g_iH$.\\ 
 
 \begin{rem}
 	We see from here that \emph{Induction} produces all projective monomial representations, up to isomorphism. Note that the coboundary source $\chi$ \emph{is not unique}. It may be altered by a character: $\chi'(h)=\psi(h)\chi(h)$ where $\psi:H\to \{\pm 1\}$ is a homomorphism. Different choices of $\chi$ will result in \emph{non-isomorphic} induced representations.
 \end{rem}

\subsection{Formal Orthogonal Pairs}
We have seen that $Hom_M(V,V')$ has a basis of $\{0,-1,1\}$-matrices. It may sometimes happen that $Hom_M(V,V')=0$. This is the case, if and only if the irreducible components of $V$ and those of $V'$ have no isomorphic components in common (by Schur's Lemma). As a corollary of Proposition \ref{prop24}, we have:

\begin{lem}\label{orth}
	Suppose that $A\in Hom_M(W,V)$ and $B\in Hom_M(W,V')$. Then 
	$$Hom_M(V,V')=0 \ \ \implies \ \ AB^T=0.$$
\end{lem}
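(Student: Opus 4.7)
The plan is to establish that $AB^T$ itself lies in $Hom_M(V',V)$ and then exploit the hypothesis together with a transpose bijection. The single key input is that in any projective monomial representation the matrices $[g]_M$ are signed permutation matrices, and therefore orthogonal: $[g]_M^T=[g]_M^{-1}$.

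To carry this out I would first write the invariance equation \eqref{inv} for the given morphisms $A\in Hom_M(W,V)$ and $B\in Hom_M(W,V')$ at each $g\in G$. Taking the transpose of the equation for $B$ and inserting $[g]_M^T=[g]_M^{-1}$ on both sides exhibits $B^T$ as satisfying the invariance condition of $Hom_M(V',W)$; that is, $B^T$ is the matrix of a morphism in the reverse direction, from $V'$ into $W$. Combining this with the invariance equation for $A$, the two copies of $[g]$ associated to the common basis of $W$ appear back-to-back and cancel, leaving $AB^T=[g]_{M_V}(AB^T)[g]_{M_{V'}}^{-1}$ for every $g\in G$. This is precisely the defining condition for an element of $Hom_M(V',V)$, so $AB^T\in Hom_M(V',V)$.

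To finish, I would observe that the same transpose manipulation, applied to an arbitrary element of $Hom_M(V,V')$, produces an $F$-linear bijection $Hom_M(V,V')\cong Hom_M(V',V)$ whose inverse is again transposition. Hence the assumption $Hom_M(V,V')=0$ upgrades to $Hom_M(V',V)=0$, and combined with the containment from the previous paragraph this yields $AB^T=0$. There is no serious obstacle: the whole argument is sign-bookkeeping, and that bookkeeping is trivialized by the orthogonality identity $[g]_M^T=[g]_M^{-1}$ for monomial matrices. The only conceptual point worth emphasizing is that $B^T$ must be reinterpreted as a morphism into $W$ rather than out of it, which is what makes the composition $AB^T$ land in a Hom-space to which Schur's lemma applies.
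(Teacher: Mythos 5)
Your proposal is correct and follows essentially the same route as the paper: transpose the invariance equation for $B$, use that monomial matrices satisfy $[g]_M^T=[g]_M^{-1}$ to conclude $B^T\in Hom_M(V',W)$, and then observe that $AB^T$ satisfies the invariance condition defining $Hom_M(V',V)$. You are in fact slightly more careful than the paper at one point: the paper passes from the hypothesis $Hom_M(V,V')=0$ to $Hom_M(V',V)=0$ without comment, whereas you justify it explicitly via the transposition bijection $Hom_M(V,V')\cong Hom_M(V',V)$.
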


\begin{proof}
	We claim that $B^T\in Hom_M(V',W)$, from which it follows that $AB^T\in Hom_M(V',V)=0 \implies AB^T=0$. The elements of $Hom_M(V,W)$ are characterized as the matrices $C$ such that $[g]_M C [g]_{M'}^{-1}=C$ for all $g\in G$. Taking transpose shows that $B^T\in Hom_M(V',W)$.
\end{proof}

\begin{cor}
	Taking formal linear combinations of the $\{0,-1,1\}$ bases of $Hom_M(W,V)$ and $Hom_M(W,V')$, we get a \emph{Formal Orthogonal Pair}.
\end{cor}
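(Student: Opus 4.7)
My plan is to apply Proposition \ref{prop24} to obtain $\{0,\pm 1\}$-bases $\{E_\alpha\}$ of $Hom_M(W,V)$ and $\{F_\beta\}$ of $Hom_M(W,V')$, each basis consisting of matrices with pairwise disjoint supports. I would index these bases by the disjoint indeterminate sets $I$ and $J$ of the FOP definition (one indeterminate per basis element), and set
\[ A \;=\; \sum_{\alpha\in I} \alpha\,E_\alpha, \qquad B \;=\; \sum_{\beta\in J} \beta\,F_\beta, \]
regarded as rectangular matrices over the non-commutative ring $\ZZ\{I\cup J\}$.

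Conditions (i) and (ii) of the FOP definition would then follow directly from the disjoint-support property: for each position $(k,\ell)$, at most one $E_\alpha$ has a nonzero entry there, and that entry is $\pm 1$, so $A_{k\ell}$ is either $0$ or $\pm\alpha$ for a unique $\alpha\in I$; the same reasoning applies to $B$. For condition (iii), I would apply Lemma \ref{orth} to each basis pair $(E_\alpha,F_\beta)$ --- invoking the implicit standing hypothesis $Hom_M(V,V')=0$ that makes the Lemma bite --- to obtain $E_\alpha F_\beta^T=0$ as honest $\ZZ$-valued matrices. Expanding the product then gives
\[ AB^T \;=\; \sum_{\alpha,\beta} \alpha\,\beta\,E_\alpha F_\beta^T \;=\; 0. \]

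The only bookkeeping point I would want to check is that the expansion of $AB^T$ is legitimate inside the non-commutative ring $\ZZ\{I\cup J\}$; concretely, that $\bigl((\alpha E_\alpha)(\beta F_\beta^T)\bigr)_{k\ell}=\alpha\beta\,(E_\alpha F_\beta^T)_{k\ell}$. This is automatic because the matrices $E_\alpha$ and $F_\beta$ have integer entries, and integers commute with every element of $\ZZ\{I\cup J\}$. Thus this small subtlety is the main (and rather minor) obstacle; once Proposition \ref{prop24} and Lemma \ref{orth} are in hand, the rest of the argument is routine.
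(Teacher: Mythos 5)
Your proof is correct and follows exactly the route the paper intends: the corollary is left unproved there as an immediate consequence of Proposition \ref{prop24} (disjoint-support $\{0,\pm1\}$-bases, giving FOP conditions (i)--(ii)) and Lemma \ref{orth} applied basis-element by basis-element (giving (iii)), under the standing hypothesis $Hom_M(V,V')=0$. Your bookkeeping remark about integer entries commuting with the indeterminates in $\ZZ\{I\cup J\}$ is the right (and only) detail worth spelling out.
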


\subsection{Orientability}\label{ori}
We are left with the question, how to construct a triple of projective monomial representations $(V,V',W)$, such that $Hom_M(W,V)\neq 0$, $Hom_M(W,V')\neq 0$, but $Hom_M(V',V)=0$? We can analyze the situation by decomposing $V,V',W$ into irreducible components over $F$. But this does not help us to construct the representations in the first place. Fortunately there is a group-theoretic practical test which can help in designing the representations.\\

First, choose a 2-cocyle $\omega:GxG\to \{\pm 1\}$, and two subgroups $H,H'\subset G$ such that the restriction to $H$ and $H'$ are couboundaries:
\begin{align}
\label{27}\omega(h_1,h_2)&=\chi(h_1h_2)\chi(h_1)^{-1}\chi(h_2)^{-1} \quad \forall h_1,h_2\in H\\
\label{28}\omega(h'_1,h'_2)&=\chi'(h'_1h'_2)\chi'(h'_1)^{-1}\chi'(h'_2)^{-1} \quad \forall h'_1,h'_2\in H.
\end{align}
Next we define two 1-dimensional projective representations $U,U'$ of $H,H'$, such that $(h,u_0)\mapsto \chi(h)u_0$ and $(h',u'_0)\mapsto \chi'(h')u'_0$. We induce to $G$ to obtain $V=Ind_H^G U$ and $V'=Ind_H^G U'$, with monomial bases $M=\{[g_i]\tensor u_0\}$ and $M'=\{[g'_i]\tensor u'_0\}$. The elements of $Hom_M(V,V')$ are the matrices $A$ s.t. $[g]_M A [g]_{M'}^{-1}=A$ for all $g\in G$. The action of $G$ partitions $A$ (more precisely $M\times M'=G/H\times G/H'$) into orbits. 
\begin{defn}
	An element $(m,m')\in M\times M'$ is \emph{orientable}, if for every element $t\in G$ such that $tm=\eps(t) m$ and $tm'=\eps'(t) m'$, we have $\eps(t)=\eps'(t)$. Otherwise it is \emph{non-orientable}.
\end{defn}

As a corollary of Proposition \ref{prop24} we have,
\begin{lem}
	Orientability depends only on the orbit. We have
	$$dim Hom_M(V,V')\ = \ \text{Number of orientable orbits.}$$
\end{lem}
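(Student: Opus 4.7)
The plan is to leverage Proposition \ref{prop24}, which supplies a $\{0,\pm1\}$-basis $\{E_i\}$ of $Hom_M(V,V')$ indexed by the $G$-orbits $O_i$ on $M\times M'$, with some $E_i$ possibly zero. Consequently $\dim Hom_M(V,V')$ equals the number of nonzero $E_i$, so it suffices to prove (a) orientability is an orbit invariant, and (b) $E_i\ne 0$ precisely when $O_i$ is orientable.

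For (a), I would take $(n,n')=g\cdot(m,m')$ with $gm=\alpha n$ and $gm'=\beta n'$; conjugation by $g$ identifies the set-theoretic stabilizers of $(m,m')$ and $(n,n')$ in $G$. For $t$ fixing $(m,m')$ with signs $\eps(t),\eps'(t)$, a short projective computation comparing $\rho(gtg^{-1})$ with $\rho(g)\rho(t)\rho(g)^{-1}$ (and using $\rho(g)^{-1}n=\alpha m$, $\rho(g)^{-1}n'=\beta m'$) gives $\rho(gtg^{-1})n=c\,\eps(t)n$ and $\rho(gtg^{-1})n'=c\,\eps'(t)n'$ for a common scalar $c\in\{\pm1\}$. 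Since $c$ is the same for both vectors, the condition $\eps(t)=\eps'(t)$ is invariant along the orbit.

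For (b), I would follow the construction of $E_i$ in the proof of Proposition \ref{prop24}: pick $(m,m')\in O_i$, set $E_i(m,m')=1$, and for any $(n,n')\in O_i$ choose $g$ with $gm=\alpha n$, $gm'=\beta n'$ and declare $E_i(n,n')=\alpha\beta$. Well-definedness reduces to showing $\alpha_1\beta_1=\alpha_2\beta_2$ whenever $g_1,g_2$ both carry $(m,m')$ to $(n,n')$ with signs $(\alpha_i,\beta_i)$. Setting $t=g_1^{-1}g_2$, which lies in the stabilizer of $(m,m')$, and applying the projective identity $\rho(g_2)=\omega(g_1,t)\rho(g_1)\rho(t)$ to $m$ and to $m'$ yields $\eps(t)=\alpha_2\alpha_1^{-1}\omega(g_1,t)^{-1}$ and $\eps'(t)=\beta_2\beta_1^{-1}\omega(g_1,t)^{-1}$; the cocycle cancels in the ratio, so the required equality holds iff $\eps(t)=\eps'(t)$. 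Imposing this for every stabilizer element is exactly orientability, so $E_i$ is nonzero iff $O_i$ is orientable.

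The principal obstacle throughout is the cocycle bookkeeping: one must verify that any $\omega$-factors that appear enter $\eps$ and $\eps'$ symmetrically, so that they cancel in the ratio $\eps(t)/\eps'(t)$ that controls orientability. Once this symmetry is confirmed, both steps reduce to routine sign tracking, and counting the orientable orbits produces the stated dimension.
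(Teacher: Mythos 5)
Your proposal is correct and follows essentially the same route as the paper, which derives the lemma directly from Proposition \ref{prop24} together with the observation (made in the remark following it) that $E_i=0$ exactly when the $G$-action assigns conflicting signs to some, equivalently every, point of $O_i$. Your two steps simply make that sign-tracking explicit: the conjugation argument shows the conflict condition is an orbit invariant, and the well-definedness computation with $t=g_1^{-1}g_2$ verifies that the cocycle factors cancel so that $E_i\neq 0$ iff $O_i$ is orientable, whence the dimension count follows from the disjoint supports of the nonzero $E_i$.
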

Finally there is a technical criterion for orientability:
\begin{lem}[\cite{goldberger2019spectral}]\label{or}
	Let $V,V'$ be the induced projective monomial representations as above. Then a point $(m,m')=([g_i]\tensor u_0,[g'_j]\tensor [u'_0])$ is orientable, if and only if 
	$$ \chi(g_i^{-1}tg_i)=\chi'(g^{'-1}_jtg'_j)\quad \forall t\in g_iHg_i^{-1}\cap g'_jH'g^{'-1}_j.$$
\end{lem}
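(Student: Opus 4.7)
The plan is to lift everything to the central extension $\wG$, where the projective representations become ordinary and orientability reduces to a clean comparison of stabilizer characters. By the coboundary identities \eqref{27}--\eqref{28}, $\chi$ and $\chi'$ extend to honest group homomorphisms $\widetilde{\chi}\colon\widetilde{H}\to\{\pm 1\}$ and $\widetilde{\chi'}\colon\widetilde{H'}\to\{\pm 1\}$ on the preimages $\widetilde{H},\widetilde{H'}\subset\wG$, both sending $-1\mapsto -1$; then $V$ and $V'$ become the ordinary induced $\wG$-representations attached to $\widetilde{\chi}$ and $\widetilde{\chi'}$.

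Fix lifts $\widetilde{g_i},\widetilde{g'_j}\in\wG$ of $g_i,g'_j$, and view $m,m'$ as the basis elements in the lifted induced model. A direct tensor-product computation shows that the $\wG$-sign-stabilizer of $m$ is $\widetilde{g_i}\widetilde{H}\widetilde{g_i}^{-1}$, which projects onto $g_iHg_i^{-1}\subset G$, and that for $\widetilde{t}$ in this stabilizer
\[\widetilde{t}\cdot m=\widetilde{\chi}\bigl(\widetilde{g_i}^{-1}\widetilde{t}\,\widetilde{g_i}\bigr)\,m,\]
with the analogous formula for $m'$. Orientability of $(m,m')$ then translates into the equality $\widetilde{\chi}(\widetilde{g_i}^{-1}\widetilde{t}\,\widetilde{g_i})=\widetilde{\chi'}(\widetilde{g'_j}{}^{-1}\widetilde{t}\,\widetilde{g'_j})$ for every $\widetilde{t}$ projecting to some $t$ in $g_iHg_i^{-1}\cap g'_jH'g_j^{'-1}$. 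Since both characters send $-1$ to $-1$, replacing $\widetilde{t}$ by $-\widetilde{t}$ multiplies each side by the same sign, so the condition depends only on $t\in G$; reading each side as the value of $\chi,\chi'$ on the conjugated elements $g_i^{-1}tg_i\in H$ and $g_j^{'-1}tg'_j\in H'$, with compatible lifts understood, is exactly the statement of the lemma.

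The main technical obstacle is the sign bookkeeping when one wants to phrase the condition directly in $G$ rather than in $\wG$. Carrying out the computation in $G$, the projective $V$- and $V'$-actions introduce cocycle factors of the form $\omega(t,g_i)\,\omega(g_i,g_i^{-1}tg_i)$ and $\omega(t,g'_j)\,\omega(g'_j,g_j^{'-1}tg'_j)$; the cocycle identity for $\omega$ applied to the decompositions $tg_i=g_i(g_i^{-1}tg_i)$ and $tg'_j=g'_j(g_j^{'-1}tg'_j)$, together with the coboundary identities \eqref{27}--\eqref{28}, must be invoked to check that these factors are precisely what is absorbed into the extensions $\widetilde{\chi},\widetilde{\chi'}$, so that the final comparison collapses to $\chi(g_i^{-1}tg_i)=\chi'(g_j^{'-1}tg'_j)$ as asserted.
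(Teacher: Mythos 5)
First, note that the paper itself offers no proof of Lemma \ref{or}: it is quoted from \cite{goldberger2019spectral}, so there is no internal argument to compare yours against, and your proposal must stand on its own. Your overall strategy is the natural one and most of it is correct: by \eqref{27}--\eqref{28} the rule $\widetilde{\chi}(\eps\, s(h)):=\eps\,\chi(h)$ is indeed a homomorphism $\widetilde{H}\to\{\pm 1\}$ sending $-1\mapsto -1$; the sign-stabilizer of $m$ is the preimage of $g_iHg_i^{-1}$; one gets $\widetilde{t}\cdot m=\widetilde{\chi}(\widetilde{g_i}^{-1}\widetilde{t}\,\widetilde{g_i})\,m$; and the remark that the comparison is independent of the choice of lift of $t$ (because both characters send $-1$ to $-1$) is correct. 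Up to that point you have proved: $(m,m')$ is orientable iff $\widetilde{\chi}(\widetilde{g_i}^{-1}\widetilde{t}\,\widetilde{g_i})=\widetilde{\chi'}(\widetilde{g'_j}^{-1}\widetilde{t}\,\widetilde{g'_j})$ for one (hence any) lift $\widetilde{t}$ of each $t$ in $g_iHg_i^{-1}\cap g'_jH'g^{'-1}_j$.

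The gap is the final step, which you yourself flag as ``the main technical obstacle'' and then do not carry out. Taking $\widetilde{t}=s(t)$, $\widetilde{g_i}=s(g_i)$, one computes $s(g_i)^{-1}s(t)s(g_i)=\omega(t,g_i)\,\omega(g_i,g_i^{-1}tg_i)\,s(g_i^{-1}tg_i)$ (signs are their own inverses), so $\eps(t)=\omega(t,g_i)\,\omega(g_i,g_i^{-1}tg_i)\,\chi(g_i^{-1}tg_i)$, and orientability reads
$$\omega(t,g_i)\,\omega(g_i,g_i^{-1}tg_i)\,\chi(g_i^{-1}tg_i)\ =\ \omega(t,g'_j)\,\omega(g'_j,g^{'-1}_jtg'_j)\,\chi'(g^{'-1}_jtg'_j).$$
Your claim that the two $\omega$-prefactors agree, so that this ``collapses'' to the displayed formula, does not follow from the cocycle identity together with \eqref{27}--\eqref{28}, and it is false for a general cocycle: for $G=(\ZZ/2)^2$ with $\omega(v,w)=(-1)^{v_1w_2}$, $H=\langle e_1\rangle$ and $t=e_1$, the prefactor equals $-1$ at the coset represented by $g_i=e_2$ but $+1$ at the coset of the identity. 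So either the lemma is to be read with $\chi(g_i^{-1}tg_i)$ as shorthand for $\widetilde{\chi}(\widetilde{g_i}^{-1}\widetilde{t}\,\widetilde{g_i})$ computed in $\wG$ with compatible lifts --- in which case your lifted computation already finishes the proof and you should say so explicitly --- or the $\omega$-prefactors must be carried along and shown to cancel under whatever normalization is in force. Asserting that they are ``absorbed into the extensions'' without performing the computation is precisely the point at which the argument is incomplete.
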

\begin{rem}
	Orientability depends on the choice of trivilalizations $\chi,\chi'$. One could modify $\chi$ to $\chi\psi$ where $\psi:H\to \{\pm 1\}$ is a homomorphism. 
\end{rem}

\section{The Cohomology Picture}
\subsection{A spectral sequence}
The theory in \S\ref{mon} can also be cast in terms of spectral sequences and cohomology. In fact this approach is more holistic, can be generalized to more contexts (action on coefficients, higher dimensional designs, connections to Brauer Groups, dual theories and more). Another advantage is that formulae can be derived from Homological Algebra. We only give here a brief statement, while a full treatment is developed in \cite{goldberger2019spectral}.\\

We begin with a finite group $G$, and two finite $G$-sets $X$ and $Y$. Let $\mu=\{\pm 1\}$, and $\mu^+=\{0,-1,1\}.$ An $X\times Y$ matrix is a $\mu^+$-valued matrix indexed by $X\times Y$. The group $G$ acts on those matrices by the rule
$$(gA)_{x,y}:=A_{g^{-1}x,g^{-1}y}.$$ Two matrices $A$ and $B$ are \emph{D-equivalent} if $A=D_1AD_2$ for $\mu$-valued diagonal matrices $D_i$. D-equivalence is denoted as $A\sim_D B$.
\begin{defn}
	A \emph{Cohomology Developed Matrix} (CDM) is an $X\times Y$ matrix such that 
	\be\label{CDM} gA\sim_D A \quad \forall g\in G.\ee
\end{defn}
It can be shown that CDMs are elements of $Hom_M(V,V')$ for projective monomial $V,V'$, and vice versa, all elements of 
$Hom_M(U,W)$ are CDM's.\\

Note that CDM's are closed under the Hadamard (= entrywise) product. Moreover, if we fix a set of orbits
$\mathcal O$ which is the support, then they form an Abelian group, denoted by $CDM(\mathcal O).$ Let $Mat(\O)$ be the group of all $\O$-supported matrices under the Hadamard product. We have
$$ CDM(\O)/D-\text{equiv.}=H^0(G,Mat(\O)/D-\text{equiv.}).$$
To compute the cohomology in the right hand side, we use a suitable resolution and obtain the following theorem. For simplicity we assume that the only trivial $D$-equivalence on $Mat(\O)$ is given by scalar matrices.

\begin{thm}
	There exists a first quadrant spectral sequence $E\implies H^*(G,Mat(\O)/D-\text{equiv.})$ whose $E^1$-page is given by
	\begin{align}
	    E^1_{0,q} &= H^q(G,\mu),\\
	    E^1_{1,q} &= \bigoplus_{H\in Stab(X)} H^q(H,\mu) \oplus \bigoplus_{H'\in Stab(Y)} H^q(H',\mu)\\
	    E^1_{2,q} &=\bigoplus_{Q\in Stab(\O)} H^q(Q,\mu)\\
	    E^1_{p,q} &=0 \quad  \forall p\ge 3. 
	\end{align}
	The differentials are given by the restriction maps. $Stab(X)$ is a the set of stabilizer subgroups of all points in $X$, up to conjugacy.
\end{thm}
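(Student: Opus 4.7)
The plan is to exhibit $\mathrm{Mat}(\mathcal{O})/D\text{-equiv.}$ as the top cohomology of a short complex of concrete $G$-modules and then feed that complex into the standard hypercohomology spectral sequence.

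First, I would convert $D$-equivalence into a translation action. The rescaling $A\mapsto D_1AD_2$ by diagonal $\mu$-matrices $D_1,D_2$ acts on $\mathcal{O}$-supported Hadamard matrices as multiplication by the single element $\tilde D_1\circ \tilde D_2$, where $\tilde D_i$ is the $\mathcal{O}$-supported matrix constant along rows, resp. columns. This defines a $G$-equivariant group homomorphism
\[
\phi:\mu^X\times\mu^Y \longrightarrow \mathrm{Mat}(\mathcal{O}),\qquad (d_1,d_2)\mapsto \tilde D_1\circ\tilde D_2.
\]
The image of $\phi$ is exactly the $D$-equivalence subgroup of $\mathrm{Mat}(\mathcal{O})$, and by the hypothesis that only scalar matrices act trivially, $\ker\phi=\mu$ embedded diagonally. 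This assembles into a four-term exact sequence of $G$-modules
\[
0\to\mu\to\mu^X\times\mu^Y\xrightarrow{\phi}\mathrm{Mat}(\mathcal{O})\to\mathrm{Mat}(\mathcal{O})/D\text{-equiv.}\to 0.
\]

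Viewing the first three terms as a complex $D^\bullet$ in degrees $0,1,2$, this complex is quasi-isomorphic, up to a degree shift, to $\mathrm{Mat}(\mathcal{O})/D\text{-equiv.}$. The first hypercohomology spectral sequence
\[
E^1_{p,q} = H^q(G,D^p) \Longrightarrow \mathbb{H}^{p+q}(G,D^\bullet)\cong H^{p+q-2}(G,\mathrm{Mat}(\mathcal{O})/D\text{-equiv.})
\]
then produces the desired spectral sequence, with vanishing for $p\ge 3$ automatic from $D^p=0$ there. The entries are computed by Shapiro's Lemma: decomposing $X=\bigsqcup_i G/H_i$ into orbits gives $\mu^X=\prod_i\mathrm{Ind}_{H_i}^G\mu$, hence $H^q(G,\mu^X)=\bigoplus_{H\in\mathrm{Stab}(X)}H^q(H,\mu)$, and the same identification applies verbatim to $\mu^Y$ and to $\mathrm{Mat}(\mathcal{O})=\mu^{\mathcal{O}}$ via the orbit decomposition of $\mathcal{O}\subset X\times Y$. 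This recovers the three nonzero rows of the $E^1$-page exactly as stated.

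The main obstacle is identifying the $d_1$-differentials with the claimed restriction maps. By functoriality, $d_1$ is $H^q(G,-)$ applied to the arrows $\mu\hookrightarrow \mu^X\times\mu^Y$ and $\phi$; each of these arrows factors as a natural map between coinduced modules $\mu^{G/H}$ attached to stabilizer inclusions. Under the Shapiro adjunction, maps of this form translate on cohomology into the restriction maps $H^q(G,\mu)\to H^q(H,\mu)$ and $H^q(H,\mu)\to H^q(Q,\mu)$ associated to the inclusions $Q\subset H\subset G$ of point-stabilizers in $X$ or $Y$ into the stabilizer of the corresponding orbit in $\mathcal{O}$. Verifying this compatibility at the level of the bar (or Cartan--Eilenberg) resolution is the crux of the argument and carries most of the bookkeeping.
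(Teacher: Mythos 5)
Your construction is correct and is essentially the ``suitable resolution'' the paper alludes to (the paper defers the actual proof to \cite{goldberger2019spectral}): the three-term complex $\mu\to\mu^X\times\mu^Y\to Mat(\O)$, quasi-isomorphic to $Mat(\O)/D\text{-equiv.}$ shifted into degree $2$, is exactly what produces this three-column $E^1$-page, and your Shapiro-lemma identification of the entries and of $d_1$ with restriction maps is consistent with the paper's subsequent Remark on $E^2_{0,2}$ and $E^3_{0,2}$. The only point worth recording explicitly is the harmless re-indexing: your spectral sequence abuts to $H^{p+q-2}(G,Mat(\O)/D\text{-equiv.})$, which matches the paper's use of total degree $2$ to compute $H^0$.
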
 

\begin{rem}
		The elements of $E^2_{0,2}$ are generated by 2-cocycles that are in the kernel of $d^1$ - the restriction to the stabilizers of points in $X$ and $Y$. This is manifested in conditions \eqref{27}-\eqref{28}. In addition, the elements of $E^3_{0,2}$ must have trivial image in $H^1(Q,\mu)$ for all $Q\in Stab(\O).$ This is the orientability condition.	
\end{rem}

\section{Examples}

\subsection{$G=$ A vector Space}
In this example we take $G=(\ZZ/2)^n$. The cohomology $H^2(G,\{\pm 1\})$ is represented by bilinear forms $\omega:G\times G\to \{\pm 1\},$ modulo isotropic forms (i.e $\omega(v,v)=1$). Take $n=4$, and the form $\omega(v,w)=\sum_{i \mod 4}v_iw_{i+1}.$ Consider the isotropic subspaces $H=span\{(1,1,1,1),(0,1,1,1)\}$, $H'=span\{(1,1,1,1),(0,1,0,1)\}$ and $K=span\{(0,0,0,1)\}$. Thus $\omega$ descends to the trivial class in the restriction to those spaces, with the following trivializations: 
\begin{align*}
\chi&=\chi_H((0,0,0,0))=1;&\ \chi_H(v)=-1 \text{ otherwise},\\
\chi'&=\chi_{H'}((0,1,0,1))=-1;&\ \chi_H(v)=1 \text{ otherwise},\\
\chi_K&=1.
\end{align*}
With this we obtain the FOP
{\small %
$$A \ = \ \left(\begin{array}{rrrrrrrr}
b & a & b & -a & a & b & -a & -b \\
b & a & -b & -a & -a & b & a & b \\
a & b & a & -b & b & a & -b & -a \\
a & b & -a & -b & -b & a & b & a
\end{array}\right),\ B \ = \ \left(\begin{array}{rrrrrrrr}
c & d & d & d & c & -c & c & d \\
-c & -d & d & -d & c & c & c & d \\
-d & -c & c & -c & d & d & d & c \\
-d & -c & -c & -c & -d & d & -d & -c
\end{array}\right).$$
}

\subsection{A partial weighing block-circulant family}
Let $n\ge 4$ be an integer, $G=\ZZ/2 \wr \ZZ/n$ be the wreath product of size $n2^n$. Then $V:=(\ZZ/2)^n \vartriangleleft G$. We work with $\omega=1$ the trivial cocycle. Next,we choose three subspaces $H_1,H_2,H_3\subset V$ and three characters $\chi_i:H_i\to \{\pm 1\}.$ We restrict our attention to spaces generated by standard vectors $\{e_i\}$. We encode the choice $(H,\psi)$ by a $\{0,-1,1\}$-vector $\rho$, where for instance $\rho=(1,-1,0,0,\ldots)$ means that $H$ is spanned by $e_1,e_2$ and $\chi(e_1)=1,\chi(e_2)=-1$. Write $X_i=G/H_i$. A straightforward application of Lemmas \ref{or} and \ref{orth} yields:

\begin{lem}\label{41} Let $(A,B)$ be the formal pair generated by the CDM's on $X_1\times X_3$ and $X_2\times X_3$. Then 
	 $AB^T=0$ if the circulant matrices $C_i$ whose first row is $\rho_i$ satisfy
		$(C_1C_2^T)_{i,j}<(|C_1||C_2|^T)_{i,j} \ \forall i,j.$ Furthermore,
		 $AA^T=\lambda I$ if $(C_1C_1^T)_{i,j}<(|C_1||C_1|^T)_{i,j} \ \forall i\neq j.$
\end{lem}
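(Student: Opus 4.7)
The plan is to apply Lemmas \ref{or} and \ref{orth} directly, exploiting the wreath-product structure $G = V \rtimes \ZZ/n$, $V = (\ZZ/2)^n$ abelian. For (i), Lemma \ref{orth} reduces $AB^T = 0$ to showing $Hom_M(V_1, V_2) = 0$; by Proposition \ref{prop24} this amounts to non-orientability of every $G$-orbit on $X_1 \times X_2 = G/H_1 \times G/H_2$. The first step is to parameterize double cosets $H_1 \backslash G / H_2$ by representatives $v\sigma^k$ with $k \in \ZZ/n$, $v \in V$. Since $V$ is abelian the stabilizer of $(H_1, v\sigma^k H_2)$ collapses to $H_1 \cap \sigma^k(H_2) \subset V$, and inner conjugation by $v\sigma^k$ on it is just the shift $\sigma^{-k}$. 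Lemma \ref{or} then turns orientability into the criterion $\chi_1(e_i) = \chi_2(e_{i-k})$ for every $i$ with $(\rho_1)_i \neq 0$ and $(\rho_2)_{i-k} \neq 0$.

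Rewritten, shift $k$ is orientable exactly when $(\rho_1)_i(\rho_2)_{i-k} \in \{0,+1\}$ for all $i$, that is $\sum_i (\rho_1)_i(\rho_2)_{i-k} = \sum_i |(\rho_1)_i||(\rho_2)_{i-k}|$, which is precisely the equality $(C_1 C_2^T)_{i,j} = (|C_1||C_2|^T)_{i,j}$ at $j-i=k$. The hypothesis of strict inequality at every $(i,j)$ therefore forces non-orientability at every shift, killing every orbit, and (i) follows.

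Part (ii) runs the same machinery on $X_1 \times X_1$: the diagonal double coset $(k,v)=(0,0)$ supplies the identity CDM, hence the $\lambda I$ term, and for each $k \neq 0$ the shift-$k$ orbits are killed by the hypothesis $(C_1 C_1^T)_{i,j} < (|C_1||C_1|^T)_{i,j}$ for $i\neq j$. The only non-routine point, and what I anticipate as the main obstacle, is the residual double cosets $(0,v)$ with $v \notin H_1$: Lemma \ref{or} is trivially satisfied there (both sides collapse to $\chi_1(t)$), so the associated CDMs are themselves nonzero in $End_M(V_1)$ and one must still show that they do not appear in $AA^T$. The cleanest way forward is to track the supports of the summands $E_i$ of $A$ and verify that each contribution $(E_i E_j^T)_{m,vm}$ either fails to arise — because the orbit offsets of $E_i,E_j$ do not differ by $v$ modulo $H_1 + \sigma^{k_i} H_3$ — or cancels in pairs $E_i E_j^T + E_j E_i^T$ via the transpose symmetry of $AA^T$. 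The rest of the proof is a direct unpacking of Lemmas \ref{or} and \ref{orth}.
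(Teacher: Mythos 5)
Your part (i) is correct and is precisely the ``straightforward application'' the paper intends: because $V=(\ZZ/2)^n$ is abelian and normal, the stabilizer data of an orbit on $X_1\times X_2$ depends only on the shift $k$, Lemma \ref{or} reduces orientability of shift $k$ to the absence of an index $i$ with $(\rho_1)_i(\rho_2)_{i-k}=-1$, this is exactly the equality $(C_1C_2^T)_{i,j}=(|C_1||C_2|^T)_{i,j}$ on the diagonal $j-i=k$, and strict inequality everywhere kills every orbit so that Lemma \ref{orth} applies.

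The obstacle you flag in part (ii) is a genuine gap, and neither of your two proposed repairs closes it. The hypothesis on $C_1$ only forces non-orientability of the shift-$k$ orbits of $X_1\times X_1$ for $k\neq 0$; the orbits of $(gH_1,gvH_1)$ with $v\in V\setminus H_1$ remain orientable, and $AA^T$ can carry a nonzero coefficient on them. A concrete instance: take $n=4$, $\rho_1=(-1,1,1,0)$, which does satisfy $(C_1C_1^T)_{i,j}<(|C_1||C_1|^T)_{i,j}$ for all $i\neq j$, and take $H_3=\{0\}$, $\chi_3=1$. Then every orbit on $X_1\times X_3$ is orientable (trivial stabilizers), and writing $A_{H_1,\,hg_c}=\chi_1(h)\,t_c$ for $g$ in the right coset $c=H_1g$, one computes $(AA^T)_{H_1,\,e_4H_1}=\sum_{c}|H_1|\,\chi_1(h_{0,c})\,t_c\,t_{e_4c}$, where $e_4g_c=h_{0,c}\,g_{e_4c}$. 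This is a sum of distinct ordered monomials with coefficients $\pm 8$ and cannot vanish. Your two suggested escapes fail here: the supports do overlap (every column $y$ contributes, since orientability on $X_1\times X_3$ depends only on the shift and hence $(H_1,y)$ and $(vH_1,y)$ lie in orientable orbits simultaneously), and transpose symmetry only relates $(AA^T)_{x,x'}$ to the order-reversed products appearing in $(AA^T)_{x',x}$, which in the noncommutative ring $\ZZ\{I\}$ produces no cancellation within a single entry.

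So part (ii) as stated requires an additional hypothesis. The simplest one, satisfied by the paper's example $\rho_1=(-1,1,\ldots,1)$, is that $\rho_1$ has full support, so that $H_1=V$, $X_1\simeq\ZZ/n$, and the diagonal is the only $k=0$ orbit; then your shift analysis alone already gives $AA^T=\lambda I$. (Note that the paper supplies no argument beyond citing Lemmas \ref{or} and \ref{orth}, so what you have located is an imprecision in the statement itself; but your writeup should either add the full-support hypothesis or supply a genuine condition coupling $\rho_1$ and $\rho_3$ that kills the off-diagonal $k=0$ coefficients, rather than presenting the support-tracking/cancellation idea as a workable route.)
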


As an example, let $\rho_1=(-1,1,\ldots,1)$, $\rho_2=(1,-1,-1,1,1\ldots,1)$ and $\rho_3=(0,0,0,1,1,\ldots,1)$. These satsify conditions (a) and (b) in Lemma \ref{41}, hence we obtain an FOP $(A,B)$, each of size $n\times 8n$. $A$ consists of $3$ orientable orbits of row weight =8 each  and $B$ has $2$ orientable orbits. Moreover, the orientable orbits of $A$ have disjoint support from those of $B$. As a corollary we get:

\begin{thm}
	For every integer $n\ge 4$, there exist a partial weighing matrix $P$ of size $4n\times 8n$ and row weight 32. Moreover, there is a reordering of $P$ such that it is composed out of circulant $n\times n$ blocks.
\end{thm}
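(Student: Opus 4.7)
The plan is to directly instantiate the construction of the preceding paragraph. First, I would verify the hypotheses (a) and (b) of Lemma~\ref{41} for the specific $\rho_1 = (-1,1,\dots,1)$, $\rho_2 = (1,-1,-1,1,\dots,1)$, $\rho_3 = (0,0,0,1,\dots,1)$. Because each $\rho_i$ differs from a $\pm 1$-constant pattern in only $O(1)$ coordinates, the entries of the circulant products $C_iC_j^T$ and of their absolute-value analogues $|C_i||C_j|^T$ can be written down explicitly in terms of those coordinates, so the strict pointwise inequalities reduce to a finite combinatorial check uniform in $n\ge 4$. Invoking Lemma~\ref{41} then produces the FOP $(A,B)$ with $A,B\in \{0,\pm 1\}^{n\times 8n}$, $AB^T=0$, and $AA^T = 24\,I_n$ (where $24=3\cdot 8$ is the total row weight of $A$).

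Next I would assemble $P$ of size $4n\times 8n$. Write $A=A_1+A_2+A_3$ and $B=B_1+B_2$ for the decompositions into the five orientable orbit CDMs, each of row weight $8$ and with pairwise disjoint supports by the preceding discussion. Because the orbits of $A$ are disjoint in support from the orbits of $B$ on the common $n\times 8n$ grid, substituting each of the five indeterminates in $(A,B)$ by an explicit $4\times 1$ column vector with entries in $\{\pm 1\}$ and summing the lifted pieces yields a $4n\times 8n$ matrix $P$ with entries in $\{0,\pm 1\}$ and uniform row weight $32$. The substitution vectors are chosen to form the rows of a $4\times 5$ sign matrix whose pairwise row inner products, weighted by orbit contributions, cancel; this is what gives $PP^T = 32\,I_{4n}$. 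The FOP identity $AB^T=0$ survives the substitution by the non-commutativity of the FOP ring.

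Finally, the circulant block structure arises from the cyclic subgroup $\ZZ/n \subset G = \ZZ/2\wr \ZZ/n$ acting by shifts on all three index sets: after a column reordering that groups the $8n$ columns of $X_3$ by the eight cosets of $H_3$ in $V$, every orbit CDM on $X_i\times X_3$ becomes a $1\times 8$ row of $n\times n$ circulant blocks, and $P$ becomes a $4\times 8$ block array of such circulants. The main obstacle will be the orthogonality verification in the middle step: Lemma~\ref{41} only supplies the aggregate identities $AA^T=24\,I$ and $AB^T=0$, whereas $PP^T = 32\,I$ additionally requires the individual orbit-wise identities $A_i A_i^T = 8\,I$ and $B_k B_k^T = 8\,I$ (which follow from $G$-transitivity on the rows of a single orbit) together with the anti-amicability relations $A_iA_j^T+A_jA_i^T=0$ for $i\neq j$. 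The latter have to be extracted from the orbit decomposition of $AA^T=24\,I$, and then combined with the chosen $4\times 5$ sign pattern to ensure that all cross-terms among the five orbits cancel to a scalar $PP^T$.
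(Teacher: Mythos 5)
Your overall route matches the paper's: verify the hypotheses of Lemma~\ref{41} for the given $\rho_i$, exploit the disjoint supports of the five orbit matrices, substitute $4\times 1$ columns for the symbols, and use the $\ZZ/n$ factor of $G$ for the circulant block structure. But your assembly step has a concrete flaw. If all five indeterminates are replaced by $\{\pm1\}$ columns, each row of $P$ acquires $5\cdot 8=40$ nonzero entries, not $32$; and no $4\times 5$ $\{\pm1\}$ matrix has pairwise orthogonal rows, since its rows have odd length $5$ and inner products of $\pm1$-vectors of odd length are odd. So the ``$4\times 5$ sign matrix with cancelling row inner products'' you invoke does not exist. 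The paper instead sends \emph{one} of the five symbols to the zero column and the remaining four to the columns of a Hadamard matrix $H(4)$; then $\sum_k v_kv_k^T=4I_4$, the row weight is $4\cdot 8=32$ as claimed, and $PP^T=8I_n\otimes 4I_4=32I_{4n}$.

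The second problem is that anti-amicability, $A_iA_j^T+A_jA_i^T=0$, is both not what the formal identity gives you and not enough for the cancellation. After substituting $a_i\gets v_i$ the cross term for $i\neq j$ is $(A_iA_j^T)\otimes(v_iv_j^T)+(A_jA_i^T)\otimes(v_jv_i^T)$, and anti-amicability kills it only when $v_iv_j^T$ is symmetric, which fails for independent $\pm1$ columns. What actually holds — and what the paper uses — is full mutual orthogonality $A_iA_j^T=0$ for all $i\neq j$ among the five orbit matrices, together with $A_iA_i^T=8I_n$ and $B_kB_k^T=8I_n$: since $AA^T=\lambda I$, $BB^T=\lambda' I$ and $AB^T=0$ are identities in the noncommutative ring $\ZZ\{I\cup J\}$, in which the monomials $a_ia_j$ and $a_ja_i$ are linearly independent, you may extract the coefficient of each monomial separately and conclude that every cross product of distinct orbit matrices vanishes outright. (In particular, $A_iA_i^T=8I_n$ comes from this coefficient extraction, not from ``$G$-transitivity on the rows,'' which only forces the diagonal of $A_iA_i^T$ to be constant.) Note finally that you must also check the Lemma~\ref{41} condition for $\rho_2$ alone, since $BB^T=\lambda' I$ is needed on the same footing as $AA^T=\lambda I$.
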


\begin{proof}(Sketch) We have $AA^T=24I_n$ and $BB^T=16I_n$, as formal matrices. It follows that the five orbits of $A$ and $B$ are mutually orthogonal. As they are disjoint, we can combine them into one formal orthogonal matrix $C=A+B$, s.t. $CC^T=32I_n$. Due to formality, we may substitute a $4x1$ column for each symbol, four of these taken from a Hadamard $H(4)$ and the fifth=0. This constructs $P$. The the $\ZZ/n$ factor in $G$ acts as automorphisms on $P$, and by analyzing orbits and taking care of signs we obtain the block-circulant structure.
\end{proof} 

\subsection{A Formal Amicable Pair}
It is easier to generate Amicable Pairs, i.e. a pair $(A,B)$ such that $AB^T$ is symmetric. For this we need $A$ and $B$ to be $X\times Y$ CDM's such that all $X\times X$ CDM's must be symmetric. The setup includes a group $G$ whose orbits on $X\times X$ are symmetric. An example is the Dihedral group $D_n$ realized as the set of affine transformations $G=\{x\mapsto ax+b\ | \ b\in \ZZ/n,\ a=\pm 1\}.$ Assume that $n$ is even. We choose $H=\{x\mapsto \pm x\}$ and $H'=\{id\}\cup \{x\mapsto x+n/2\}$. Take $\omega=1$ and let $\chi$ and $\chi'$ to be the unique nontrivial characters. The resulting Formal Amicable Pair is:

{\tiny
$$ A,B= \left(\begin{array}{rrrrrr}
c & -c & -a & a & -b & b \\
-b & -a & -b & c & c & a \\
-a & -b & c & -b & a & c \\
-c & c & a & -a & b & -b \\
-b & -a & -b & c & c & a \\
-a & -b & c & -b & a & c
\end{array}\right), \left(\begin{array}{rrrrrr}
f & -f & -d & d & -e & e \\
-e & -d & -e & f & f & d \\
-d & -e & f & -e & d & f \\
-f & f & d & -d & e & -e \\
-e & -d & -e & f & f & d \\
-d & -e & f & -e & d & f
\end{array}\right).$$
}

\subsection{Summary} Formal Orthogonal Pairs provide a new methodology to construct Hadamard and Weighing Matrices. The main tools in their construction involves representation theory and Group Cohomology. They can be used either via block substitutions, or by partitioning a matrix into formal orthogonal tuples.

\bibliographystyle{alpha}
\bibliography{biblfop}

\end{document}